\documentclass[a4paper, 11pt, reqno]{amsart}
\usepackage[english]{babel}
\usepackage[utf8]{inputenc}
\usepackage[T1]{fontenc}
\usepackage{a4wide, amsthm, amsmath, amssymb, amsrefs, enumerate, enumitem, mathtools, lmodern, microtype, mathrsfs, listings, tikz-cd, comment, float}
\usepackage{a4wide}
\usepackage{hyperref}
\usepackage{nicefrac}
\usepackage{tikz}
\usetikzlibrary{shapes,positioning,intersections,quotes}
\theoremstyle{plain}
\newtheorem{thm}{Theorem}[section]
\newtheorem{prop}[thm]{Proposition}
\newtheorem{lemma}[thm]{Lemma}

\newtheorem{conj}[thm]{Conjecture}
\theoremstyle{definition}
\newtheorem{defn}[thm]{Definition}

\theoremstyle{remark}
\newtheorem*{rmk}{Remark}

\makeatletter
\let\@@pmod\pmod
\DeclareRobustCommand{\pmod}{\@ifstar\@pmods\@@pmod}
\def\@pmods#1{\mkern4mu({\operator@font mod}\mkern 6mu#1)}
\makeatother

\binoppenalty=\maxdimen
\relpenalty=\maxdimen

\numberwithin{equation}{section}
\setlist{nosep}
\setlist{noitemsep}

\lstdefinelanguage{Sage}[]{Python}
{morekeywords={False,sage,True},sensitive=true}

\newcommand{\C}{\mathbb{C}}
\renewcommand{\H}{\mathbb{H}}
\newcommand{\Z}{\mathbb{Z}}

\newcommand{\N}{\mathbb{N}}
\newcommand{\R}{\mathbb{R}}

\newcommand{\slz}{{\text {\rm SL}}_2(\mathbb{Z})}

\DeclarePairedDelimiter\floor{\lfloor}{\rfloor}

\title{On the Log-Concavity of the D'Arcais Polynomials for Normalised Functions}

\author[Stumpenhusen]{Johann Stumpenhusen}
\address{Department of Mathematics and Computer Science, Division of Mathematics, University of Cologne,
	Weyertal 86-90, 50931 Cologne, Germany}
\email{jstumpen@math.uni-koeln.de}
\subjclass{Primary 11F20; Secondary 05A20.}
\keywords{Dedekind eta function, Generating functions, Recurrence relations.}

\begin{document}

\begin{abstract}
    A sequence $(a_n)_{n \in \N}$ of non-negative real numbers is called \emph{log-concave} at $n$ if $a_n^2 \geq a_{n+1}a_{n-1}$. This property has been generalised in various ways to families of polynomials. We introduce a new variant and show that certain types of D'Arcais polynomials have the respective properties at certain points. 
\end{abstract}

\maketitle

\section{Introduction}

%COPY FROM "On the Detection of Non-Roots of the D'Arcais Polynomials"

\subsection{D'Arcais polynomials.} Let
\[\eta(\tau) \coloneqq q^{\frac{1}{24}}\prod_{n = 1}^\infty (1 - q^n), \qquad q \coloneqq e^{2\pi i \tau}, \tau \in \H,\]
be the Dedekind eta function appearing in multiple areas of combinatorics and number theory because it is closely related to the reciprocal of the generating function of the partition function. As $\eta(\tau)$ is a modular form of weight $\frac{1}{2}$ for $\slz$, it admits a Fourier expansion. The exact properties of the Fourier coefficients of the powers $\eta(\tau)^{-X}$ have been of great interest, dating back to Euler and Jacobi investigating pentagonal and triangular numbers  (see Koehler \cite{Koehler}, Ono \cite{On03}). In fact, we can rewrite the infinite product as
\begin{equation*}\label{eq:DefD'ArcaisPolynomials}
    \prod_{n = 1}^\infty (1 - q^n)^{-X} = \exp \left(X \sum_{n = 1}^\infty \sigma(n) \frac{q^n}{n}\right) =: \sum_{n = 0}^\infty P_n^\sigma(X)q^n
\end{equation*}
where $\sigma(n) \coloneqq \sum_{d \mid n} d$ is the usual sum-of-divisors-function. The polynomials $P_n^\sigma(X)$ are called \textit{D'Arcais polynomials} (or \textit{Nekrasov}--\textit{Okounkov} polynomials in combinatorics) and the central object of this paper.

%END COPY

The above formula is in not unique to $\eta$, in a sense. That is, let $g: \N \longrightarrow \Z$ with $g(1) = 1$, we instead obtain
\begin{equation}\label{eq:ExponentialFormula}
    \sum_{n = 0}^\infty P_n^g(X)q^n \coloneqq \prod_{n = 1}^\infty (1 - q^n)^{-f_g(n)X} = \exp \left(X \sum_{n = 1}^\infty g(n) \frac{q^n}{n}\right)
\end{equation}
where $f_g(n) \coloneqq \frac{1}{n} \sum_{d \mid n}\mu(d)g\left(\frac{n}{d}\right)$ is a rescaled convolution of $g$ with the usual M\"obius function $\mu$. Notice that, by M\"obius inversion, we have $f_\sigma(n) = 1$ for all $n$. In a recent work, the author and Heim \cite{HeimStum26} showed that quite a few techniques that apply in the case of $\sigma$ remain applicable for those functions $g$. This in turn was inspired by work by \.{Z}mija \cite{Zmija} and Heim--Neuhauser \cite{HeimNeu25}.

\subsection{Previous results on log-concavity} While the $P_n^g(X)$ represent Fourier coefficients, these polynomials also have coefficients themselves and their growth behaviour is the central aspect of this paper. We write
\[P_n^g(X) = \sum_{k = 0}^n p_n^g(k)X^k\]
and call a sequence $(a_n)_{n \in \N}$ of positive real numbers \emph{log-concave} at $n_0 \in \N \setminus \{0\}$ if
\[a_{n_0}^2 \geq a_{n_0 - 1}a_{n_0 + 1}.\]
\begin{conj}[Heim--Neuhauser \cite{HeimNeu20}*{Challenge 3}]\label{conj:HeimNeuhauser}
    For every $n \in \N$, the sequence $(p_n^\sigma(k))_{k \in \N}$ is log-concave at each $k \geq 1$.
\end{conj}

Shortly after this conjecture was posed, Abdesselam \cite{Abdessel23} generalised it to a wider family of arithmetic functions. Let $\ell \in \N$ be a natural number and $g_\ell(n)$ the number of subgroups of index $n$ in $\Z^\ell$. These functions satisfy the relation
\[f_{g_\ell}(n) = g_{\ell - 1}(n)\]
for all $n \in \N$. In particular, $g_2 = \sigma$ as the identity $g_\ell(n) = \sum_{d_1 \dots d_\ell = n}d_1^0d_2^1 \dots d_\ell^{\ell - 1}$ reveals that $g_\ell$ is the convolution of increasing powers of the identity function. Abdesselam's generalised version reads as follows.

\begin{conj}[Abdesselam \cite{Abdessel23}*{Conjecture 1.1}]\label{conj:Abdesselam}
    For every $\ell, n \in \N$, the sequence $(p_n^{g_\ell}(k))_{k \in \N}$ is log-concave at each $k \geq 1$.
\end{conj}

 As Abdesselam, Brunialti, Doan, and Velie \cite{AbPruDoVe} point out that the case where $\ell = 1$ is known since $p_n^{g_1}(k)$ equals the unsigned Stirling number of the first kind $c(n,k)$. In their work, they provide a rather inexplicit formula for $p_n^{g_\ell}(k)$ and use this to prove the conjecture in the case where $\ell = 2$ and $k = n - 1$. The case $\ell = 2$ being the most accessible and yet unsolved one also attracted the attention of Hong and Zhang \cite{HongZhang} who considered the asymptotical case in a family of shifted D'Arcais polynomials.

\iffalse
\begin{thm}[Hong--Zhang \cite{HongZhang}*{Theorem 1.1}]\label{thm:HongZhang}
    Let $Q_n^g(X) \coloneqq P_n^g(X + 1) = \sum_{k = 0}^n q_n^g(k) X^k$. For $n$ sufficiently large, we have
    \begin{itemize}
        \item[(a)] For $k \ll \frac{n^{\frac{1}{6}}}{\log n}$, we have $q_n^{g_2}(k)^2 \geq q_n^{g_2}(k - 1)q_n^{g_2}(k + 1)$.
        \item[(b)] For $k \gg \sqrt{n}\log n$, we have $q_n^{g_2}(k) \geq q_n^{g_2}(k + 1)$.
    \end{itemize}
\end{thm}
\fi

Furthermore, both conjectures have been proven to not hold true in all generality. Starr \cite{Starr} presented a more refined approach to show that, in the case $\ell = 2$, Conjecture \ref{conj:Abdesselam} does not hold for $k = 2$ but does hold for $k \geq 3$, eventually.

\begin{thm}[Starr \cite{Starr}*{Corollary 2.4}]\label{thm:Starr}
    Let $n \in \N$ be arbitrary. It holds that
    \begin{equation*}
        \liminf_{n \to \infty} \frac{p_n^{g_2}(2)^2}{p_n^{g_2}(3)p_n^{g_2}(1)} = 0
    \end{equation*}
    and
    \begin{equation*}
        \liminf_{n \to \infty} \frac{p_n^{g_2}(k)^2}{p_n^{g_2}(k+1)p_n^{g_2}(k-1)} > 1
    \end{equation*}
    for $3 \leq k \leq n - 1$.
\end{thm}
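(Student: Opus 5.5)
The plan is to compute, for each fixed $k$, the asymptotic size of $p_n^{g_2}(k)$ as $n \to \infty$ with an explicit arithmetic constant. The ratios can then be read off directly. Here $g_2 = \sigma$, and comparing coefficients of $X^k$ in \eqref{eq:ExponentialFormula} gives
\[
p_n^{\sigma}(k) = \frac{1}{k!}\sum_{\substack{m_1+\dots+m_k = n\\ m_i \geq 1}} \prod_{i=1}^k \frac{\sigma(m_i)}{m_i}.
\]
I will use $\frac{\sigma(m)}{m} = \sum_{d \mid m} \frac{1}{d}$ and write $m_i = d_i t_i$. The sum becomes
\[
\frac{1}{k!}\sum_{d_1,\dots,d_k} \frac{N_n(d_1,\dots,d_k)}{d_1\cdots d_k},
\]
where $N_n(d)$ is the number of positive solutions of $d_1t_1+\dots+d_kt_k = n$.

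For $k \geq 2$, a standard lattice-point count gives
\[
N_n(d) = \mathbf{1}_{e \mid n}\,\frac{e\, n^{k-1}}{(k-1)!\, d_1\cdots d_k} + O\!\left(n^{k-2}\cdot(\text{poly in } d)\right),
\]
where $e = \gcd(d_1,\dots,d_k)$. I will truncate at $d_i \leq D$, use the crude bound $N_n(d) \ll n^{k-1}/(d_1\cdots d_k)$ together with $\sum d^{-2}<\infty$ for the tail, and let $D \to \infty$ slowly. This should give
\[
p_n^\sigma(k) = \frac{n^{k-1}}{k!(k-1)!}\, C_k(n) + o(n^{k-1}), \qquad C_k(n) = \sum_{e \mid n}\ \sum_{\gcd(d)=e} \frac{e}{(d_1\cdots d_k)^2}.
\]
Substituting $d_i = e d_i'$ yields $C_k(n) = \frac{\zeta(2)^k}{\zeta(2k)}\,\sigma_{1-2k}(n)$. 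This constant is bounded above and below uniformly in $n$, since $1 \leq \sigma_{1-2k}(n) \leq \zeta(2k-1)$ for $k \geq 2$. Controlling the error term uniformly enough to justify this step is where I expect the main technical work to lie. The main term, however, is clean.

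For $k \geq 3$, all three of $p_n(k-1), p_n(k), p_n(k+1)$ have indices $\geq 2$, so the asymptotic applies to each of them. The powers of $n$ cancel, and the factorials give the factor $\frac{(k+1)!\,k!\,(k-1)!\,(k-2)!}{(k!)^2((k-1)!)^2} = \frac{k+1}{k-1}$. Hence the ratio tends, up to $o(1)$, to
\[
\frac{k+1}{k-1}\cdot\frac{\zeta(2k+2)\zeta(2k-2)}{\zeta(2k)^2}\cdot \frac{\sigma_{1-2k}(n)^2}{\sigma_{-1-2k}(n)\,\sigma_{3-2k}(n)}.
\]
I will bound the last factor below by $1/(\zeta(2k+1)\zeta(2k-3))$. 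For $k=3$ the resulting constant is
\[
\frac{2\,\zeta(8)\zeta(4)}{\zeta(6)^2\zeta(7)\zeta(3)} \approx 1.7 > 1.
\]
For larger $k$ the constant is even larger, because $\frac{k+1}{k-1} > 1$ and all zeta values tend to $1$; monotonicity of the zeta factors in $k$ makes this a routine check. This gives $\liminf > 1$.

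For $k=2$, the only change is that $p_n(1) = \sigma(n)/n$ is not an average and does not satisfy the asymptotic. The asymptotics for $p_n(2)$ and $p_n(3)$ still show that $p_n(2)^2/p_n(3)$ stays bounded. Taking $n$ along a sequence with $\sigma(n)/n \to \infty$, for instance $n = N!$ (divergence of the harmonic series), then gives $\liminf = 0$.
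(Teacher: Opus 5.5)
The paper does not prove this statement. It is quoted from Starr \cite{Starr} as background, so there is no in-paper argument to compare yours with. Judged on its own, your route is sound: expand $\sigma(m)/m=\sum_{d\mid m}1/d$, count solutions of $d_1t_1+\dots+d_kt_k=n$, and truncate in $d$. The main term $p_n^\sigma(k)\sim\frac{\zeta(2)^k}{\zeta(2k)}\,\frac{\sigma_{1-2k}(n)\,n^{k-1}}{k!\,(k-1)!}$ is also correct. As a check, for $k=2$ your main terms give $\frac{p_n(2)^2}{p_n(3)p_n(1)}\sim\frac{30}{7}\cdot\frac{\sigma_{-3}(n)^2}{\sigma_{-5}(n)}\cdot\frac{n}{\sigma(n)}$. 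At $n=65\,214\,507\,758\,400=2^6\cdot3^3\cdot5^2\cdot7^2\cdot11\cdot13\cdot17\cdot19\cdot23\cdot29$ this is about $0.996$, consistent with the explicit counterexample from \cite{CharltonStum} quoted in the paper. Two steps, however, are wrong as written.

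First, the crude tail bound $N_n(d)\ll n^{k-1}/(d_1\cdots d_k)$ fails uniformly in $d$. For $d_1=\dots=d_k=d\mid n$ one has $N_n(d)=\binom{n/d-1}{k-1}\asymp(n/d)^{k-1}$, which exceeds $n^{k-1}/d^k$ by a factor of order $d$; the factor $e=\gcd(d_1,\dots,d_k)$ in your main term reflects the same effect. The repair is easy. The value $t_j$ is determined by the other $t_i$. The cubes $\prod_{i\neq j}[t_i-1,t_i]$ have disjoint interiors and lie in the simplex $\{x\geq 0:\sum_{i\neq j}d_ix_i\leq n\}$. Hence
\[
N_n(d)\le\frac{n^{k-1}}{(k-1)!\prod_{i\ne j}d_i}\ \text{ for every } j,\qquad\text{so}\qquad N_n(d)\le\frac{n^{k-1}\,\min_j d_j}{(k-1)!\,d_1\cdots d_k}.
\]
Because $\min_j d_j\le\sqrt{d_1d_2}$, the series $\sum_{d}\min_j d_j/(d_1\cdots d_k)^2$ converges for $k\ge2$. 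This bounds the tail of the $N_n$-sum uniformly in $n$. Via $e\le\min_j d_j$ it also bounds the tail of the series for $C_k(n)$, and with that your truncation argument goes through.

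Second, the constant does not increase with $k$. In fact $\frac{k+1}{k-1}\downarrow1$, and your lower bound tends to $1$: it is about $1.73$ at $k=3$ and $1.62$ at $k=4$. After using $\zeta(2k+2)\zeta(2k-2)\ge\zeta(2k)^2$ (Cauchy--Schwarz), what you need is $\frac{k+1}{k-1}>\zeta(2k+1)\zeta(2k-3)$. This holds for every $k\ge3$: the bound $\zeta(s)-1\le2^{-s}\frac{s+1}{s-1}$ gives $\zeta(2k+1)\zeta(2k-3)\le1+2^{5-2k}<1+\frac{2}{k-1}$.

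Finally, your argument gives the second assertion for each fixed $k\ge3$. That is the only coherent reading of the statement's quantifiers, and it agrees with the paper's paraphrase (``does hold for $k\geq3$, eventually''). Nothing in your argument controls $k$ growing with $n$.
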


\begin{rmk}
    While Starr's approach disproves the conjectures for $\ell = k = 2$, computing an explicit counterexample involves further explicit bounds on convolutions of divisor sums and was done by Charlton, Heim, and the author \cite{CharltonStum}.
\end{rmk}

\subsection{Horizontal and vertical log-concavity} As the $P_n^g(X)$ form a sequence of polynomials of degree $n$ (see Subsection \ref{subsec:RelationDArcais} for more details), we may arrange them in a triangular order as indicated below.
\[
\begin{array}{ccccccccc}
    P_0^g(X)& = & p_0^g(0) & & & & & &\\
    P_1^g(X)& = & p_1^g(0) & + & p_1^g(1)X & & & &\\
    P_2^g(X)& = & p_2^g(0) & + & p_2^g(1)X & + & p_2^g(2)X^2& &\\
    \vdots &  & \vdots &  & \vdots &  & \vdots & & \ddots 
\end{array}
\]
This representation led Heim and Neuhauser to also consider the sequences $(p_n^g(k))_{n \in \N}$.

\begin{defn}
    A sequence $\left(F_n(X)\right)_{n \in \N} \subset \R[X]$ of polynomials where
    \[F_n(X) = \sum_{k = 0}^{\deg(F_n)}f_n(k)X^k\]
    is called
    \begin{itemize}
        \item[(a)] \emph{horizontally log-concave} at $k \in \N \setminus \{0\}$ for $n \in \N$ if $f_n(k)^2 \geq f_n(k + 1)f_n(k - 1)$;
        \item[(b)] \emph{vertically log-concave} at $n \in \N \setminus \{0\}$ for $k \in \N$ if $f_n(k)^2 \geq f_{n + 1}(k)f_{n - 1}(k)$.
    \end{itemize}
\end{defn}

Most of the aforementioned statements thus concern horizontal log-concavity. The latter of those two terms has seen far less usage in recent research but Heim and Neuhauser proved the vertical log-concavity in quite a few cases \cite{HeimNeu20}.

In this article, we also introduce another variant of log-concavity inspired by the arrangement above. We say that the $F_n(X)$ are \emph{skewly log-concave} at $k \in \N$ for $n \in \N_{\geq k + 1}$ if
\begin{equation*}\label{defeq:SkewLogConcave}
    f_n(n - k)^2 \geq f_{n + 1}(n + 1 - k)f_{n - 1}(n - 1 - k)
\end{equation*}
holds.

\section{Statement of Results}\label{sec:StatementOfResults}

For our results, we will usually refer to the renormalised polynomials $A_n^g(X) \coloneqq n!P_n^g(X)$. Canceling out the denominator, these polynomials are elements in $\Z[X]$ and thus sometimes easier to handle. Furthermore, any kind of log-concavity of $A_n^g(X)$ at a specific point implies the same kind of log-concavity for $P_n^g(X)$ at that same point and in the case of horizontal log-concavity, they are even equivalent.

\subsection{Horizontal and vertical log-concavity} Our first result proves the horizontal and vertical log-concavity at certain points for a large family of arithmetic functions and may be viewed as a reinforcement of previous results by Abdesselam, Brunialti, Doan, and Velie \cite{AbPruDoVe}*{Proposition 3.1} and Starr's Theorem \ref{thm:Starr}. 

\begin{thm}\label{thm:GeneralLogConcavity}
    Let $g$ be a normalised $\Z$-valued arithmetic function such that $g(2) \neq 0$ and $k \in \N$ be a natural number. Then, for each $k$, there exist $n_h(g,k), n_v(g,k) \in \N$ such that $A_n^g(X)$ as well as $P_n^g(X)$ are horizontally log-concave at $n - k$ for each $n \geq n_h(g,k)$ and vertically log-concave at each $n \geq n_v(g,k)$ for $n - k$.
\end{thm}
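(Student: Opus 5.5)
The plan is to write the coefficients near the top degree explicitly as polynomials in $n$ and compare leading terms. Write $a_n(j) \coloneqq n!\,p_n^g(j)$ for the coefficients of $A_n^g(X)$. Put $S(q) \coloneqq \sum_{m \geq 1} g(m)\frac{q^m}{m} = q\,(1 + h(q))$ with $h(q) = \frac{g(2)}{2}q + \frac{g(3)}{3}q^2 + \dots$; this uses $g(1)=1$. Expanding the exponential in \eqref{eq:ExponentialFormula} in powers of $X$ gives
\[
p_n^g(n-k) = \frac{1}{(n-k)!}\,[q^n]\, S(q)^{n-k} = \frac{1}{(n-k)!}\,[q^k]\,(1+h(q))^{n-k}.
\]

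\textbf{Step 1: polynomiality in $n$.} By the binomial theorem, $[q^k](1+h)^{N} = \sum_{j \le k} \binom{N}{j} [q^k] h^j$. Only $j \le k$ contributes, since $h^j = O(q^j)$. Hence this coefficient is a polynomial $c_k(N) \in \mathbb{Q}[N]$ of degree at most $k$. Its $N^k$-coefficient comes only from $j = k$ and equals $\frac{1}{k!}\left(\frac{g(2)}{2}\right)^k$, which is nonzero because $g(2) \neq 0$. Consequently
\[
a_n(n-k) = \frac{n!}{(n-k)!}\, c_k(n-k) \eqqcolon F_k(n)
\]
is a polynomial in $n$ of exact degree $2k$ with leading coefficient $L_k = \frac{g(2)^k}{2^k k!}$. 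This holds for all $n \ge k$, and we set $F_{-1} \equiv 0$, which matches $a_n(n+1) = 0$. Establishing this identity carefully, including the validity range in $n$, is the main technical step; everything afterwards is asymptotics.

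\textbf{Step 2: horizontal log-concavity.} We need $F_k(n)^2 \ge F_{k-1}(n)\,F_{k+1}(n)$. For $k = 0$ the right-hand side vanishes, so the inequality is trivial. For $k \ge 1$ both sides are polynomials of degree $4k$, with leading coefficients $L_k^2$ and $L_{k-1}L_{k+1}$ respectively. Both are positive, since each is $g(2)^{2k}$ times a positive rational, even when $g(2)<0$. Their ratio is
\[
\frac{L_k^2}{L_{k-1}L_{k+1}} = \frac{(k-1)!\,(k+1)!}{(k!)^2} = \frac{k+1}{k} > 1.
\]
Therefore the difference $F_k(n)^2 - F_{k-1}(n)F_{k+1}(n)$ has positive leading coefficient and is positive for all $n \ge n_h(g,k)$. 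Horizontal log-concavity of $A_n^g$ and of $P_n^g$ are equivalent, because the common factor $(n!)^2$ cancels.

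\textbf{Step 3: vertical log-concavity.} At $n$ for $n-k$ we need $a_n(n-k)^2 \ge a_{n+1}(n-k)\,a_{n-1}(n-k)$. Rewriting $n-k = (n+1)-(k+1) = (n-1)-(k-1)$, this becomes
\[
F_k(n)^2 \ge F_{k+1}(n+1)\,F_{k-1}(n-1).
\]
Again both sides have degree $4k$ in $n$; the shifts $n \pm 1$ do not change the leading coefficients. The leading-coefficient ratio is the same $\frac{k+1}{k}>1$, and $k=0$ is trivial. This gives $n_v(g,k)$.

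To pass to $P_n^g$, divide the inequality for $A_n^g$ by $(n!)^2$. This yields $p_n^g(n-k)^2 \ge \frac{n+1}{n}\, p_{n+1}^g(n-k)\,p_{n-1}^g(n-k)$. If the product on the right is nonnegative, this implies the desired inequality because $\frac{n+1}{n} \ge 1$. If the product is negative, the desired inequality is trivial.
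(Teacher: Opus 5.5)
Your proof is correct. The second half, comparing the leading coefficients $L_k^2$ and $L_{k-1}L_{k+1}$ and noting that the shifts $n\pm 1$ do not change them, is exactly the paper's argument.

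You differ from the paper in how you obtain the key input: that $a_n^g(n-k)$ is a polynomial in $n$ of degree $2k$ with leading coefficient $\frac{g(2)^k}{2^k k!}$.

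\emph{The paper's route.} It gets this from Proposition~\ref{prop:DegreeOfPolynomials}(c), proved by induction on $k$ through the recurrence \eqref{eq:CoeffcientRecurrenceA}. The first difference $a_n^g(n-k)-a_{n-1}^g(n-1-k)$ is a sum of terms $\frac{(n-1)!}{(n-m)!}g(m)\,c_{k+1-m}^g(n-m)$. The $m=2$ term dominates, with degree $2k-1$. A discrete antiderivative (Lemma~\ref{lem:DifferenceIsAPolynomial}) then raises the degree by one and divides the leading coefficient by $2k$.

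\emph{Your route.} You extract the coefficient of $X^{n-k}$ directly from \eqref{eq:ExponentialFormula}, factor $q^{n-k}$ out of $S(q)^{n-k}$, and apply the binomial theorem. This gives the closed formula $a_n^g(n-k)=\frac{n!}{(n-k)!}\sum_{j\le k}\binom{n-k}{j}[q^k]h(q)^j$, valid for all $n\ge k$.

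\emph{What each buys.} Your formula is explicit; it reproduces the paper's table entries for $k=1,2$. It makes the leading coefficient and the range of validity transparent, where the paper only says that item (c) follows by ``the same inductive argument''. It also needs no auxiliary lemmas. The paper's induction treats all items of Proposition~\ref{prop:DegreeOfPolynomials} at once for reuse in Theorem~\ref{thm:SkewLogConcavity}. Your formula would give those items as well: if $g(m)=0$ for $2\le m<\kappa$, then $h(q)^j=O(q^{j(\kappa-1)})$, which yields the degree $k+\lfloor k/(\kappa-1)\rfloor$.

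Your passage from $A_n^g$ to $P_n^g$ for vertical log-concavity is more careful than the paper's one-line assertion. You track the factor $\frac{n+1}{n}$ and split on the sign of $p_{n+1}^g(n-k)\,p_{n-1}^g(n-k)$. This matters here, since $g(2)<0$ makes some of these coefficients negative.
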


\begin{rmk} Abdesselam, Brunialti, Doan, and Velie \cite{AbPruDoVe}*{Proposition 3.1} showed that $n_h(g_\ell,1) = 3$ for all $\ell \geq 1$. Due to work by Charlton, Heim, and the author \cite{CharltonStum}, we also know that
    \[n_h(g_2,65\,214\,507\,758\,398) > 65\,214\,507\,758\,400.\]
\end{rmk}

%\subsection{Hong--Zhang} Similarly, we ask whether it is possible to generalize Theorem \ref{thm:HongZhang} to not just $g_\ell$ for all $\ell \in \N$ but any normalized function $g$. There should be a dependence on some values of $g$. (OPTIONAL)

\subsection{Skew log-concavity} Our second result establishes some first properties of the $A_n^g(X)$ regarding skew log-concavity. In most of the research so far, the coefficients were considered starting from the \emph{bottom} rather than the \emph{top}. We justify the usefulness of our new perspective by the remarkable simplicity of the following theorem.

\begin{thm}\label{thm:SkewLogConcavity}
    Let $g$ be a normalised $\N$-valued arithmetic function and $k \in \N$.
    \begin{itemize}
        \item[(a)] If there exists $\kappa \in \N$ such that $g(\kappa) \neq 0$ and $g(m) = 0$ for $2 \leq m \leq \kappa$, then there exists an $n_0 \in \N$ such that $A_n^g(X)$ as well as $P_n^g(X)$ are skewly log-concave at $k$ for $n \geq n_0$.
        \item[(b)] If $g = e$,\footnote{The identity with respect to the Dirichlet convolution.} then $A_n^e(X)$ and $P_n^e(X)$ are skewly log-concave everywhere.
    \end{itemize}
\end{thm}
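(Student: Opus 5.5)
The plan is to write down the top coefficients of $A_n^g(X)$ in closed form and then reduce skew log-concavity to an elementary statement about two explicit functions of $n$. Write $S(q) \coloneqq \sum_{m \geq 1} g(m)\frac{q^m}{m} = q\,(1 + h(q))$ with $h(q) \coloneqq \sum_{m \geq 2} \frac{g(m)}{m} q^{m-1}$, using $g(1) = 1$. Expanding the exponential in \eqref{eq:ExponentialFormula} in powers of $X$ gives $p_n^g(j) = \frac{1}{j!}[q^n]S(q)^j$. Setting $j = n-k$ yields
\[
a_n(n-k) \coloneqq n!\,p_n^g(n-k) = \frac{n!}{(n-k)!}\,c_k(n-k), \qquad c_k(N) \coloneqq [q^k](1+h(q))^N = \sum_{j=0}^{k}\binom{N}{j}[q^k]h(q)^j .
\]
So $a_n(n-k) = F_k(n)\,c_k(n-k)$ with $F_k(n) \coloneqq n(n-1)\cdots(n-k+1)$. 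The function $c_k(N)$ is a polynomial in $N$ of degree at most $k$. Since $g$ is $\N$-valued, every $[q^k]h^j \geq 0$, so $c_k(N) \geq 0$ for $N \geq 0$.

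Skew log-concavity at $k$ is multiplicative in this factorisation, so it suffices to show that $n \mapsto F_k(n)$ and $n \mapsto c_k(n-k)$ are each log-concave at $n$. For the first factor a direct computation gives
\[
\frac{F_k(n)^2}{F_k(n+1)F_k(n-1)} = \frac{n(n+1-k)}{(n+1)(n-k)} = 1 + \frac{k}{(n+1)(n-k)} \geq 1
\]
for all $n \geq k+1$, with strict inequality when $k \geq 1$.

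For the second factor in part (a), I treat two cases.
\begin{itemize}
\item If $c_k \equiv 0$, both sides vanish and the inequality is trivial.
\item Otherwise let $d$ be the largest $j \leq k$ with $[q^k]h^j > 0$. This exists, and $d$ is finite, because $h$ starts with the term $\frac{g(\kappa)}{\kappa}q^{\kappa-1}$. Then $c_k(N) = \alpha N^d + O(N^{d-1})$ with $\alpha > 0$.
\end{itemize}
In the second case a Taylor expansion of $\log c_k$ gives
\[
\log c_k(N+1) + \log c_k(N-1) - 2\log c_k(N) = -\frac{d}{N^2} + O(N^{-3}).
\]
Equivalently, $c_k(N)^2 - c_k(N+1)c_k(N-1) = \alpha^2 d N^{2d-2} + O(N^{2d-3})$. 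This is eventually nonnegative: strictly positive if $d \geq 1$, and identically $0$ if $d = 0$ since then $c_k$ is constant. Multiplying the two inequalities gives skew log-concavity of $A_n^g$ for $n \geq n_0$. The statement for $P_n^g$ then follows, as noted at the start of Section \ref{sec:StatementOfResults}: the extra factor $\frac{n!^2}{(n+1)!\,(n-1)!} = \frac{n}{n+1} < 1$ only helps.

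For part (b), $g = e$ gives $h = 0$, so $c_k(N) = \delta_{k,0}$. For $k = 0$ all three coefficients equal $1$ and equality holds. For $k \geq 1$ all three vanish, so $0 \geq 0$. This works for every $n$.

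The only step needing care is the asymptotic log-concavity of $c_k(N)$ in part (a). The leading-order computation is routine, but to get an effective $n_0$ one has to control the $O(N^{2d-3})$ remainder. I would handle that by writing $c_k$ in the binomial basis $\binom{N}{j}$ with nonnegative coefficients and bounding the lower-order contributions explicitly. I do not expect a real obstacle here.
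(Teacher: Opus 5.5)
Your proof is correct, but it reaches the key structural fact by a different route from the paper. The paper never writes $a_n^g(n-k)$ down explicitly. Instead it shows by induction on $k$ (Proposition \ref{prop:DegreeOfPolynomials}, via the recurrence \eqref{eq:CoeffcientRecurrenceA} and a discrete summation lemma) that $n \mapsto a_n^g(n-k)$ is a polynomial. It then applies the leading-term lemma for $f(X)^2 - f(X+1)f(X-1)$ directly to that polynomial, with no factorisation. You instead extract coefficients from $\exp(XS(q))$ to get the closed form $a_n^g(n-k) = n(n-1)\cdots(n-k+1)\,c_k(n-k)$ with $c_k(N) = \sum_{j}\binom{N}{j}[q^k]h^j$, and prove log-concavity factor by factor.

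The paper's last step is shorter and needs no sign information, because that leading coefficient is positive for any real polynomial of positive degree. Your product argument does use $g \geq 0$, since both factors must be nonnegative, but the hypothesis grants this. In return your closed form gives more:
\begin{itemize}
\item The falling factorial is log-concave for every $n \geq k+1$, so the exceptional range is governed only by the lower-degree factor $c_k$. This is what your plan for an effective $n_0$ exploits.
\item The degree of $a_n^g(n-k)$ is $k+d$, and $d \leq \lfloor k/(\kappa-1)\rfloor$ can be strict.
\item $a_n^g(n-k)$ may vanish identically, e.g.\ for $1 \leq k \leq \kappa-2$. The degree formula in Proposition \ref{prop:DegreeOfPolynomials}(a) does not account for this case, but you handle it explicitly.
\end{itemize}

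Two small remarks. First, the existence of $d$ follows from $c_k \not\equiv 0$ alone, so your argument never really needs $\kappa$. That is just as well, given the misprint $2 \le m \le \kappa$ in the hypothesis. Second, your constant $d\alpha^2$ is the correct one; the paper's lemma states $2da_d^2$, which is harmless for positivity.
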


\section{Preliminaries}

\subsection{Arithmetic functions} A function $g: \N \longrightarrow \C$ is called an \emph{arithmetic function}. We say that $g$ is \emph{normalised} if $g(1) = 1$. In particular, an arithmetic function is a pointwise multiple of a normalised function if and only if it is invertible with respect to the Dirichlet convolution.

We use the following notation for some standard arithmetic functions:

\begin{itemize}
    \item the $\ell$-th sum-of-divisors-function $\sigma_\ell(n) \coloneqq \sum_{d \mid n} d^\ell$ with the special case $\sigma \coloneqq \sigma_1$;
    \item the identity with respect to the Dirichlet convolution $e(n) \coloneqq \begin{cases}1, &n = 1,\\ 0, &\textup{otherwise;}
    \end{cases}$
    \item the $\ell$-th power-function $I_\ell(n) \coloneqq n^\ell$ with the special case $E \coloneqq I_0$.
\end{itemize}

\subsection{Recurrence relation of the D'Arcais polynomials} \label{subsec:RelationDArcais}
Equation \eqref{eq:ExponentialFormula} can be derived invoking the identity
\[\log (1 - q) = - \sum_{n = 1}^\infty \frac{q^n}{n}.\]
Applying the differential operator $\frac{\partial}{\partial q}$ then yields
\[P_n^g(X) = \frac{X}{n} \sum_{m = 1}^n g(m)P_{n-m}^g(X)\]
which transfers down to the respective coefficients by
\begin{equation*}\label{eq:CoefficientRecurrenceP}
    p_n^g(k) = \frac{1}{n} \sum_{m = 1}^{n - k + 1} g(m)p_{n-m}^g(k-1).
\end{equation*}
For $A_n^g(X) = n!P_n^g(X)$, this means that
\begin{equation}\label{eq:CoeffcientRecurrenceA}
    a_n^g(k) = \sum_{m = 1}^{n - k + 1}\frac{(n - 1)!}{(n - m)!}g(m)a_{n-m}^g(k - 1).
\end{equation}

\subsection{Coefficients of the D'Arcais polynomials}

The study of the coefficients of the D'Arcais polynomials set foot onto a higher level with the following theorem by Heim and Neuhauser which offers a rather explicit formula for $A_n^g(X)$.

\begin{thm}[Heim--Neuhauser \cite{HeimNeu25a}*{Theorem 1.2}]\label{thm:CoefficientsExplicitFormula}
    Let $g$ be a normalised arithmetic function. We have
    \[\frac{a_n^g(k)}{n!} = \sum_{\mu = (\mu_1, \ldots, \mu_{l(\mu)}) \in P(n - k)} \prod_{j = 1}^{l(\mu)} g(\mu_j + 1) \prod_{i = 0}^{l(\mu) + n - m - 1}(n - i) \sum_{\lambda \in S_{l(\mu)}\mu} \prod_{j = 1}^{l(\mu)} \left(j + \sum_{i = 1}^j \lambda_i\right)\]
    where $P(n - k)$ is the set of partitions of $n - k$, $l(\mu)$ is the length of a partition and $S_{l(\mu)}$ is the symmetric group.
\end{thm}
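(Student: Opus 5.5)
The plan is to prove the formula directly from the coefficient recurrence \eqref{eq:CoeffcientRecurrenceA}. We iterate it $k$ times down to $a_0^g(0)=1$. The recursion peels off one part $m$ at each step and multiplies by $\frac{(N-1)!}{(N-m)!}$, where $N$ is the current index. Unfolding the $k$ steps therefore gives
\[
a_n^g(k)=\sum_{\substack{(m_1,\dots,m_k)\in\N_{\ge1}^k\\ m_1+\dots+m_k=n}}\ \prod_{j=1}^k g(m_j)\,\frac{(N_j-1)!}{(N_j-m_j)!},\qquad N_j\coloneqq n-\sum_{i<j}m_i .
\]
Since $N_j-m_j=N_{j+1}$ and $N_{k+1}=0$, the factorials telescope to
\[
\frac{(n-1)!}{\prod_{j=2}^{k}N_j}=\frac{n!}{\prod_{j=1}^{k}N_j}.
\]
Here $N_j$ is the $j$-th tail sum $m_j+\dots+m_k$. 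Reading the composition from the right, this becomes
\[
\frac{a_n^g(k)}{n!}=\sum_{\text{compositions}}\prod_j g(m_j)\Big/\prod_{j}(m_k+\dots+m_{k-j+1}).
\]
As a consistency check, this is exactly what one obtains by expanding $\exp\bigl(X\sum g(m)q^m/m\bigr)$ and symmetrising.

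The second step is to separate the parts equal to $1$ from the parts $\ge2$. Write each part $\ge2$ as $\mu_j+1$. Then $\sum(m_i-1)=n-k$, so the multiset of the numbers $\mu_j$ is a partition $\mu\in P(n-k)$ of length $l(\mu)$. Moreover, $g(1)=1$ ensures that only $\prod_{j} g(\mu_j+1)$ survives as the $g$-weight. Each composition is then determined by three data:
\begin{itemize}
\item the partition $\mu$;
\item an ordering $\lambda\in S_{l(\mu)}\mu$ of its parts;
\item the positions of the $k-l(\mu)$ ones among the $k$ slots.
\end{itemize}

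The third step is to perform the sum over the positions of the ones explicitly, for fixed $\lambda$. In the reversed reading, each tail sum equals (number of entries so far) $+\sum_{i\le r}\lambda_i$, where $r$ is the number of non-trivial parts already passed. So between two consecutive big parts the denominators run through consecutive integers. Summing over the ways to insert the ones should produce the falling-factorial prefactor $\prod_i (n-i)$ appearing in the statement. The product $\prod_{j}\bigl(j+\sum_{i\le j}\lambda_i\bigr)$ should remain from the tail sums at the big parts: this is the value $j+\lambda_1+\dots+\lambda_j$ when the $j$-th big part sits just after a block of ones. I expect this insertion sum to be the \textbf{main obstacle}. The natural tool is the elementary identity
\[
\sum_{0\le t}\frac{1}{(a+1)(a+2)\cdots(a+t)}\cdot(\dots)
\]
or, more cleanly, an induction on the number of ones: inserting one extra $1$ into every possible position multiplies the total by a single linear factor. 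I would first verify this by checking the cases $l(\mu)=0$ (where everything reduces to $\frac{1}{k!}\cdot\frac{n!}{n!}$-type identities) and $l(\mu)=1$ by hand. Then I would set up the induction on $k-l(\mu)$ using the recurrence in $n$.

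Finally, I would reconcile the index range of the falling factorial: the upper limit written as $l(\mu)+n-m-1$ should read $l(\mu)+n-k-1$ with $m=k$. I would confirm the normalisation on small cases such as $g=\sigma$, $n\le 4$, against the expansion of $\prod(1-q^n)^{-X}$.
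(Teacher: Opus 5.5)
Your first two steps are sound. Unfolding the recurrence for $a_n^g(k)$ and telescoping does give $\frac{a_n^g(k)}{n!}=\sum\prod_j g(m_j)\prod_j(m_1+\dots+m_j)^{-1}$ over compositions $(m_1,\dots,m_k)$ of $n$, and writing the parts $\geq 2$ as $\mu_j+1$ is the right bookkeeping. (The paper gives no proof of this theorem; it only quotes Heim--Neuhauser.) The gap is the third step. It is the whole content of the theorem, you leave it as an expectation, and the expectation is wrong. Your summands are products of \emph{reciprocals} of prefix sums, so the prefix sums at the big parts can only survive in a denominator, never as the factor $\prod_j(j+\sum_{i\le j}\lambda_i)$ in the numerator. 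Also, with $a_n^g(k)/n!$ on the left, no insertion sum can produce the falling factorial $\prod_{i=0}^{l(\mu)+n-k-1}(n-i)=n!/(k-l(\mu))!$. (A smaller error: the prefix sum at the $j$-th big part is $p_j+\lambda_1+\dots+\lambda_j$, with $p_j\geq j$ its position, so it equals $j+\lambda_1+\dots+\lambda_j$ only if no ones precede it.) In fact the displayed formula is misprinted beyond the $m\to k$ typo you caught. For $k=n\geq 2$ its left side is $1/n!$ and its right side is $1$. For $k=n-1$ it gives $2g(2)n(n-1)$ instead of $g(2)/(2(n-2)!)$. The small-case check you defer to the end would have shown that you are aiming at a false target.

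The missing idea is a shuffle identity. Set $F(c_1,\dots,c_r)\coloneqq\prod_{j=1}^r(c_1+\dots+c_j)^{-1}$ and let $|a|$ denote the sum of the letters of a word $a$. For words $a,b$ of positive numbers, $\sum_\sigma F(\sigma)=F(a)F(b)$, where $\sigma$ runs over all interleavings of $a$ and $b$ that keep the internal orders. The proof is by induction on the total length. Every $\sigma$ has last prefix sum $|a|+|b|$. Split according to whether the last letter comes from $a$ or from $b$, and let $a',b'$ be $a,b$ with the last letter dropped. The induction hypothesis gives $\frac{1}{|a|+|b|}\bigl(F(a')F(b)+F(a)F(b')\bigr)$, which equals $F(a)F(b)$ because $F(a')=|a|F(a)$ and $F(b')=|b|F(b)$. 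Now take $a=(1,\dots,1)$ of length $k-l(\mu)$, so $F(a)=1/(k-l(\mu))!$, and $b=(\lambda_1+1,\dots,\lambda_{l(\mu)}+1)$. This evaluates the insertion sum in closed form and yields
\[
a_n^g(k)=\sum_{\mu\in P(n-k)}\prod_{j=1}^{l(\mu)}g(\mu_j+1)\prod_{i=0}^{l(\mu)+n-k-1}(n-i)\sum_{\lambda\in S_{l(\mu)}\mu}\prod_{j=1}^{l(\mu)}\Bigl(j+\sum_{i=1}^{j}\lambda_i\Bigr)^{-1}.
\]
This is the correct form of the quoted result: $a_n^g(k)$ rather than $a_n^g(k)/n!$ on the left, and the reciprocal of the inner product. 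For $k=n-2$ it reproduces the table entry $\frac{g(3)}{3}n(n-1)(n-2)+\frac{g(2)^2}{8}n(n-1)(n-2)(n-3)$. Your ``insert one more $1$'' induction is consistent with this, since passing from $t$ to $t+1$ ones multiplies the total by $\frac{1}{t+1}$. As formulated, though, it is not an argument: an inserted $1$ shifts all later prefix sums, and the same configuration arises from several insertion points.
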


Reordering and simplifying this result leads us to the first part of our proof.

\section{Proofs}

This section is dedicated to the proofs of our results.

\subsection{Polynomial representation} The first steps are a simple lemma from discrete analysis and a technical one about a certain recurrence.

\begin{lemma}\label{lem:DifferenceIsAPolynomial}
    Let $d \in \N$ and $f: \N \to \R$ such that there exists a polynomial $p_1(X) \in \R[X]$ of degree $d - 1$ satisfying $p_1(n) = f(n) - f(n - 1)$ for all $n \in \N$. Then there exists a polynomial $p_0(X) \in \R[X]$ of degree $d$ satisfying $p_0(n) = f(n)$ for all $n \in \N$.
\end{lemma}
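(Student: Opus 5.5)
The plan is to write $f(n)$ as a telescoping sum of the values of $p_1$ and then show that summing a polynomial of degree $d-1$ produces a polynomial of degree $d$. I read $\N=\{0,1,2,\dots\}$, with the hypothesis $p_1(n)=f(n)-f(n-1)$ needed only for $n\geq 1$. Summing that relation from $m=1$ to $n$ gives
\[
f(n)=f(0)+\sum_{m=1}^{n}p_1(m)\qquad\text{for all } n\in\N .
\]
This reduces the lemma to the claim that $S(n)\coloneqq\sum_{m=1}^n p_1(m)$ agrees on $\N$ with a polynomial of degree exactly $d$ (the degree convention for $d=0$ is handled at the end). Setting $p_0(X)\coloneqq f(0)+S(X)$ then gives $p_0(n)=f(n)$. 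Adding the constant $f(0)$ does not change the degree when $d\geq 1$.

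To prove the claim, I expand $p_1$ in the binomial basis. The polynomials $\binom{X-1}{j}=\frac{(X-1)(X-2)\cdots(X-j)}{j!}$ for $j=0,\dots,d-1$ have degree $j$ and leading coefficient $1/j!$. They therefore form a basis of the real polynomials of degree $\leq d-1$, so
\[
p_1(X)=\sum_{j=0}^{d-1}c_j\binom{X-1}{j},\qquad c_{d-1}=(d-1)!\cdot\operatorname{lc}(p_1)\neq 0 .
\]
The hockey-stick identity $\sum_{m=1}^{n}\binom{m-1}{j}=\binom{n}{j+1}$ then gives
\[
S(n)=\sum_{j=0}^{d-1}c_j\binom{n}{j+1}\qquad\text{for all } n\in\N .
\]
The right-hand side is a polynomial in $n$. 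Its leading term is $c_{d-1}\binom{n}{d}$, with leading coefficient $c_{d-1}/d!=\operatorname{lc}(p_1)/d\neq 0$, so it has degree exactly $d$.

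Alternatively, one could induct on $d$ and subtract $\frac{\operatorname{lc}(p_1)}{d}X^d$ from $f$. The difference of $X^d-(X-1)^d$ has leading term $dX^{d-1}$, so this subtraction lowers the degree of the difference polynomial. The binomial-basis route avoids the bookkeeping of that induction.

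No step here is a genuine obstacle. The only points needing care are the following:
\begin{itemize}
\item The recurrence must hold for every $n\geq 1$, so that the telescoping reaches all of $\N$.
\item The degree is exactly $d$ rather than merely at most $d$, which follows from $c_{d-1}\neq 0$.
\item The degenerate case $d=0$ needs separate handling. There $p_1$ must be the zero polynomial under the convention $\deg 0=-1$, so $f$ is constant and $p_0\coloneqq f(0)$ works. One should also note that $f(0)\neq 0$ is needed for degree $0$, or else accept the usual conventions for the zero polynomial.
\end{itemize}
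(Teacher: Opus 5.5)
Your proposal is correct and follows the paper's route: telescope to $f(n)=f(0)+\sum_{m=1}^{n}p_1(m)$, then show this sum is a polynomial of degree $d$. The paper covers that second step with the phrase ``partial summation''; your binomial-basis and hockey-stick computation makes it explicit and confirms the degree is exactly $d$, with leading coefficient equal to that of $p_1$ divided by $d$, and your caveat about $d=0$ is a fair point about the statement itself.
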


\begin{proof}
    Rearranging the given equation, we obtain
    \[f(n) = p_1(n) + f(n - 1)\]
    and hence
    \[f(n) = f(0) + \sum_{m = 1}^n p_1(m)\]
    inductively. Using partial summation for the sum yields the claim.
\end{proof}

\begin{lemma}\label{lem:SpecialRecurrenceSolution}
    Let $\kappa \in \N_{\geq 2}$ and the sequence $(a_n)_{n \in \N}$ be defined via the recurrence
    \[
        a_n \coloneqq \begin{cases}0, &0 \leq n \leq \kappa - 2;\\
        1 + \max_{\kappa \leq m \leq n + 1}\{a_{n + 1 - m} + m - 1\}, &n \geq \kappa - 1.
    \end{cases}\]
    Then
    \[a_n = n + \left\lfloor\frac{n}{\kappa - 1}\right\rfloor\]
    for $n \geq \kappa - 1$.
\end{lemma}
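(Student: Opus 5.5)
We argue by strong induction on $n$, writing $b_n \coloneqq n + \lfloor n/(\kappa-1)\rfloor$. As a preliminary step we make the maximum explicit. For $n \geq \kappa - 1$ put $j = n+1-m$; as $m$ runs over $\kappa \leq m \leq n+1$, $j$ runs over $0 \leq j \leq n+1-\kappa$. Then
\[
a_n = 1 + \max_{0 \leq j \leq n+1-\kappa}\{a_j + n - j\} = n + 1 + \max_{0 \leq j \leq n+1-\kappa}\{a_j - j\}.
\]
So the task is to determine $\max_{j \leq n+1-\kappa}(a_j - j)$. Note that this maximum always involves only indices $j \leq n - \kappa + 1 < n$, which is what makes the induction well-founded.

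Next we evaluate $a_j - j$ on both ranges of $j$. For $0 \leq j \leq \kappa-2$ we have $a_j = 0$, so $a_j - j = -j$, and the largest of these values is $0$, attained at $j=0$. For $\kappa - 1 \leq j < n$ the induction hypothesis gives $a_j - j = \lfloor j/(\kappa-1)\rfloor$, which is nondecreasing in $j$. Hence, whenever $n+1-\kappa \geq \kappa-1$, the maximum is attained at $j = n+1-\kappa$ and equals
\[
\left\lfloor\frac{n+1-\kappa}{\kappa-1}\right\rfloor = \left\lfloor\frac{n}{\kappa-1}\right\rfloor - 1.
\]
This value is at least $0$, so it also dominates the first range. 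The identity holds because subtracting $\kappa-1$ from the numerator lowers the floor by exactly one. Therefore $a_n = n + \lfloor n/(\kappa-1)\rfloor$ in this case.

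It remains to treat the base range $\kappa-1 \leq n \leq 2\kappa-3$, where $n+1-\kappa \leq \kappa-2$. Here only indices $j$ with $a_j = 0$ occur, so the maximum equals $0$ and $a_n = n+1$. On the other hand, $\lfloor n/(\kappa-1)\rfloor = 1$ throughout this range, so $a_n = b_n$ as claimed. This also settles the induction start.

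The main point needing care is the boundary bookkeeping: the induction must be set up so that $a_j$ for $j \leq \kappa-2$ is given by the initial values and not by the formula. The formula would give $a_j = j$ there, which for $j \geq 1$ overestimates the true contribution $-j$ to the maximum. Beyond that, we only need to check the floor identity, which is routine.
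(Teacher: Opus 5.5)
Your proof is correct and follows the same route as the paper. You pass to $a_n - n = 1 + \max_{0 \le j \le n+1-\kappa}\{a_j - j\}$ (the paper's auxiliary sequence $b_n$) and show that it increases by one every $\kappa-1$ steps; your strong induction and separate treatment of $\kappa-1 \le n \le 2\kappa-3$ supply the details the paper calls ``easily verifiable,'' and your handling of the initial values is more careful than the paper's, which sets $b_n = 0$ for $n \le \kappa-2$ although $a_n - n = -n$ there (harmless, since $j=0$ always lies in the range of the maximum).
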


\begin{proof}
    This is easily verifiable using the sequence
    \[b_n \coloneqq \begin{cases} 0, &0 \leq n \leq \kappa - 2;\\
        a_n - n = 1 + \max_{\kappa \leq m \leq n + 1}\{b_{n + 1 - m}\}, &n \geq \kappa - 1.
    \end{cases}\]
    which increases by 1 every $(\kappa - 1)$-th step.
\end{proof}

Both lemmata will now be combined to help us prove the most central auxiliary result in this article.

\begin{prop}\label{prop:DegreeOfPolynomials}
    Let $g$ be a normalised $\Z$-valued arithmetic function.
    \begin{itemize}
        \item[(a)] If $g$ only takes non-negative values and there exists $\kappa \in \N_{\geq 2}$ such that $g(\kappa) \neq 0$ and $g(m) = 0$ for $2 \leq m \leq \kappa - 1$, then there exists a polynomial $c_k^g(X) \in \R[X]$ of degree $k + \floor{\frac{k}{\kappa - 1}}$ such that $a_n^g(n - k) = c_k^g(n)$ for all $n$.
        \item[(b)] If no such $\kappa$ exists, then $a_n^g(n - k)$ is constant.
        \item[(c)] If $\kappa = 2$, then the leading coefficient of $c_k^g(X)$ is given by $\frac{g(2)^k}{2^kk!}$.\footnote{This case does not require $g$ to only take non-negative values.}
    \end{itemize}
\end{prop}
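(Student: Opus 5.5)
The plan is to turn the coefficient recurrence \eqref{eq:CoeffcientRecurrenceA} into a recurrence along the diagonals $n - k = \text{const}$ of the triangle, and then induct on $k$ using Lemma \ref{lem:DifferenceIsAPolynomial} and Lemma \ref{lem:SpecialRecurrenceSolution}. Put $c_k(n) \coloneqq a_n^g(n-k)$, with the convention that $a_n^g(j) = 0$ for $j<0$ or $j>n$. Substituting $k \mapsto n-k$ in \eqref{eq:CoeffcientRecurrenceA} gives
\[a_n^g(n-k) = \sum_{m=1}^{k+1} \frac{(n-1)!}{(n-m)!}\, g(m)\, a_{n-m}^g(n-k-1).\]
Since $n-k-1 = (n-m) - (k+1-m)$, the $m$-th summand involves $c_{k+1-m}(n-m)$. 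The term $m=1$ is exactly $c_k(n-1)$, because $g(1)=1$ and $(n-1)!/(n-1)! = 1$. Therefore
\[c_k(n) - c_k(n-1) = \sum_{m=2}^{k+1} (n-1)(n-2)\cdots(n-m+1)\, g(m)\, c_{k+1-m}(n-m).\]
The base case is $c_0(n) = a_n^g(n) = n!\,p_n^g(n) = 1$, since $p_n^g(n) = 1/n!$ by the recurrence for $p_n^g$.

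For (a) and (b) I would induct on $k$. Assume that for every $j<k$ the function $c_j$ agrees with a polynomial of degree $d_j$ on all relevant $n$. In the difference above only $m \geq \kappa$ contributes, because $g(m) = 0$ for $2 \le m \le \kappa-1$. Each surviving summand is the product of a falling factorial of degree $m-1$ with $g(m)c_{k+1-m}(X-m)$, which is a polynomial of degree $m-1+d_{k+1-m}$ whenever $g(m) \neq 0$.

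Because $g \geq 0$, an induction shows that all leading coefficients are positive. Hence no cancellation can occur, and the difference is a polynomial of degree exactly $\max_{\kappa\le m\le k+1}\{d_{k+1-m}+m-1\}$ whenever this range is nonempty, i.e.\ whenever $k \geq \kappa-1$. Lemma \ref{lem:DifferenceIsAPolynomial} then gives $d_k = 1 + \max_{\kappa \le m \le k+1}\{d_{k+1-m} + m - 1\}$. For $k \le \kappa-2$ the difference is empty, so $c_k$ is constant and $d_k = 0$. This is precisely the recurrence of Lemma \ref{lem:SpecialRecurrenceSolution}, which yields $d_k = k + \floor{k/(\kappa-1)}$; this formula also holds for $k \le \kappa-2$. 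For (b), $g(m)=0$ for all $m\ge 2$, so every difference vanishes and $c_k$ is constant.

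One subtle point deserves care: the identity should hold for all $n$, including $n<k$, where $c_k(n)=0$ by convention. I would check that the falling factorial $(n-1)\cdots(n-m+1)$ together with the vanishing of lower $c_j$ at small arguments makes the recurrence hold uniformly in $n \geq 1$, so that Lemma \ref{lem:DifferenceIsAPolynomial} applies from $n=0$. If this fails, I would restrict to $n \ge k$ and choose the polynomial accordingly.

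For (c), take $\kappa = 2$, so $d_j = 2j$. The $m$-th summand then has degree $m-1+2(k+1-m) = 2k+1-m$. This is uniquely maximised at $m=2$, with degree $2k-1$, so no cancellation can occur even when $g$ takes negative values. The leading coefficient of the difference is therefore $g(2)L_{k-1}$, where $L_j$ denotes the leading coefficient of $c_j$. Summing a polynomial of degree $2k-1$ with leading coefficient $\ell$ over $1,\dots,n$ produces leading term $\ell n^{2k}/(2k)$. Hence $L_k = g(2)L_{k-1}/(2k)$, and with $L_0 = 1$ this gives $L_k = g(2)^k/(2^k k!)$.

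I expect the main obstacle to be bookkeeping rather than an idea: making the no-cancellation argument in (a) rigorous, which needs positivity of the leading coefficients propagated inductively, and verifying the boundary range of $n$.
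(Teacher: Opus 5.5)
Your route is the paper's: rewrite \eqref{eq:CoeffcientRecurrenceA} as a difference equation along the diagonals, induct on $k$, take the degrees from Lemma~\ref{lem:SpecialRecurrenceSolution}, and integrate with Lemma~\ref{lem:DifferenceIsAPolynomial}. The boundary check works. For $1\le n\le k$ both sides vanish, because either $n-k-1<0$ or the falling factorial contains the factor $0$. Parts (b) and (c) are fine. In (c), the unique maximum at $m=2$ lets you prove $\deg c_j=2j$ inductively with no sign assumption, so you should not appeal to (a) there.

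The genuine gap is in (a) for $\kappa\ge 3$, in two places.
\begin{itemize}
\item For $1\le k\le\kappa-2$ the difference is empty, but $c_k$ is not a constant of degree $0$. Since $c_k(k)=a_k^g(0)=0$, in fact $c_k\equiv 0$. So your claims that $d_k=0$ and that $k+\lfloor k/(\kappa-1)\rfloor$ ``also holds for $k\le\kappa-2$'' are false; that expression equals $k\ge 1$ there. For example, for $\kappa=3$ one has $a_n^g(n-1)=g(2)\binom{n}{2}=0$, not a polynomial of degree $1$.
\item The degree of the difference is the maximum of $d_{k+1-m}+m-1$ only over those $m$ with $g(m)\neq 0$ \emph{and} $c_{k+1-m}\not\equiv 0$. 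You note the first condition but then maximise over all $\kappa\le m\le k+1$, which is exactly what the recursion in Lemma~\ref{lem:SpecialRecurrenceSolution} encodes. If $\kappa=3$ and $g(4)=0$, then
\[c_3(n)-c_3(n-1)=(n-1)(n-2)\,g(3)\,c_1(n-3)+(n-1)(n-2)(n-3)\,g(4)=0,\]
so $c_3\equiv 0$ rather than having degree $4$.
\end{itemize}

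So the no-cancellation bookkeeping cannot be completed: (a) is false as stated for $\kappa\ge 3$. The paper's proof has the same defect, since it applies Lemma~\ref{lem:SpecialRecurrenceSolution} without checking that its recursion describes the actual degrees. The exact picture comes from
\[a_n^g(n-k)=\frac{n!}{(n-k)!}\,[q^k]H(q)^{n-k},\qquad H(q)=1+\sum_{m\ge 2}\frac{g(m)}{m}q^{m-1}.\]
For $g\ge 0$ and $k\ge 1$, $\deg c_k$ is $k$ plus the largest number of parts in a representation of $k$ as a sum of numbers $m-1$ with $m\ge 2$ and $g(m)\neq 0$; if no such representation exists, $c_k\equiv 0$.

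What your argument does prove is (b), (c), and (a) for $\kappa=2$. It also proves (a) for $\kappa\ge 3$ when $k=0$ or $k\ge\kappa-1$, under the extra hypothesis $g(m)>0$ for all $m\ge\kappa$. In that case the $m=k+1$ term always contributes degree $k$. This dominates the spurious values $k-j$ that the lemma's recursion assigns to the identically vanishing $c_j$, $1\le j\le\kappa-2$, so the lemma's output coincides with the true degrees. Your write-up should add such a hypothesis and track which $c_j$ vanish identically in the positivity step.
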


\begin{proof}
    All items follow by the same inductive argument. We note that 
    \[a_n^g(n) = 1\]
    for every $n \in \N$ and invoke Equation \eqref{eq:CoeffcientRecurrenceA}. Rearranging yields
    \begin{equation*}\label{eq:RecurrenceIntoDifference}
        a_n^g(n-k) - a_{n-1}^g(n - 1 - k) = \sum_{m = \kappa}^{k + 1} \frac{(n - 1)!}{(n - m)!}g(m)a_{n-m}^g(n - m - (k + 1 - m))
    \end{equation*}
    which is, by virtue of Lemma \ref{lem:SpecialRecurrenceSolution}, the zero polynomial in the case of item (b) or a polynomial of degree $k + \floor{\frac{k}{\kappa - 1}}$ in $n$ otherwise. Note that $\frac{(n - 1)!}{(n - m)!}$ is of degree $m - 1$ in $n$. Thus, the assertions follow by Lemma \ref{lem:DifferenceIsAPolynomial}.
\end{proof}

\begin{rmk}
    In the case of $\kappa \in \N_{\geq 3}$, the formula for the leading coefficient gets more involved. In particular, if $g$ is not assumed to only take non-negative values, cancellations may appear, rendering item (a) invalid.
\end{rmk}

By this last result, we acquire quite satisfactory knowledge about the behaviour of the coefficients $a_n^g(n-k)$ which will be exploited in the subsections to come.

\subsection{Horizontal and vertical log-concavity} First, we will tackle the results concerning the previously investigated concepts of horizontal and vertical log-concavity. Recalling the definition of these, Proposition \ref{prop:DegreeOfPolynomials} encourages us to replace the defining inequalities by
\begin{equation*}\label{eq:DefnInequalitiesAsPolynomials}
    c_k^g(n)^2 \geq c_{k+1}(n)c_{k-1}(n) \qquad \textup{and} \qquad c_k^g(n)^2 \geq c_{k+1}(n + 1)c_{k-1}(n - 1)
\end{equation*}
for horizontal and vertical log-concavity, respectively.

\begin{proof}[Proof of Theorem \ref{thm:GeneralLogConcavity}]
    Invoking item (c) of Proposition \ref{prop:DegreeOfPolynomials}, we obtain
    \[\frac{g(2)^k}{2^kk!}, \qquad \frac{g(2)^{k+1}}{2^{k+1}(k+1)!}, \qquad \frac{g(2)^{k-1}}{2^{k-1}(k-1)!}\]
    as the leading coefficients of $c_k^g(X)$, $c_{k+1}^g(X)$, and $c_{k-1}^g(X)$, respectively. Hence the leading coefficient of
    \[c_k^g(X)^2 - c_{k+1}^g(X)c_{k-1}^g(X)\]
    equals
    \[\frac{g(2)^{2k}}{2^{2k}(k+1)!k!} > 0.\]
    This proves horizontal log-concavity for $A_n^g(X)$.
    
    For vertical log-concavity, the situation reveals itself to be quite similar since substituting $n$ by $n + 1$ or $n - 1$ does not affect the leading coefficient at all.

    As mentioned at the beginning of Section \ref{sec:StatementOfResults}, these imply the same properties for $P_n^g(X)$.
\end{proof}

\begin{rmk}
    Note that while the number $n_h(g,k)$ may be chosen to be optimal for both families of polynomials simultaneously, $P_n^g(X)$ may be vertically log-concave for some $n$ for which $A_n^g(X)$ is not.
\end{rmk}

\subsection{Skew log-concavity} We move on to a rather technical lemma concerning log-concavity of polynomials as functions.

\begin{lemma}\label{lem:CoefficientOff(X)^2 - f(X+1)f(X-1)}
    Let $f(X) = \sum_{k = 0}^d a_k X^k \in \R[X]$ be a polynomial of degree $d$. Then
    \[f(X)^2 - f(X + 1)f(X - 1) = \begin{cases}
        0, &d = 0;\\
        2da_d^2X^{2d - 2} + O(X^{2d - 3}), &d \geq 1.
    \end{cases}\]
\end{lemma}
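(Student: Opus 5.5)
The plan is to expand $f(X\pm 1)$ by Taylor's formula around $X$ and read off the top surviving term of $f(X)^2 - f(X+1)f(X-1)$. For a polynomial the expansion is exact:
\[
f(X\pm 1) = \sum_{j=0}^{d} (\pm 1)^j \frac{f^{(j)}(X)}{j!}.
\]

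First I dispose of $d = 0$. Then $f$ is a constant $a_0$, so $f(X)^2 - f(X+1)f(X-1) = a_0^2 - a_0^2 = 0$, which is the first case of the statement.

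For $d \geq 1$ I multiply the two expansions. Write $E(X) = \sum_{j \text{ even}} f^{(j)}(X)/j!$ and $O(X) = \sum_{j \text{ odd}} f^{(j)}(X)/j!$, so that $f(X\pm1) = E \pm O$. Then
\[
f(X+1)f(X-1) = E^2 - O^2 ,
\]
and hence
\[
f(X)^2 - f(X+1)f(X-1) = f^2 - E^2 + O^2 .
\]
The next step is to sort the terms by degree. Since $f^{(j)}$ has degree $d-j$, we have $E = f + \tfrac12 f'' + (\text{degree} \leq d-4)$ and $O = f' + (\text{degree} \leq d-3)$. Therefore $f^2 - E^2 = -f f'' + O(X^{2d-3})$, in fact $O(X^{2d-4})$, and $O^2 = (f')^2 + O(X^{2d-4})$. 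This already shows that all terms of degree $2d$ and $2d-1$ cancel. The difference is therefore
\[
(f')^2 - f f'' + O(X^{2d-3}),
\]
which has the shape claimed in the statement, with the leading term in degree $2d-2$.

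The last step is the leading coefficient, and this is the only place that needs care. The top coefficient of $(f')^2$ is $d^2 a_d^2$ and that of $f f''$ is $d(d-1)a_d^2$, so their difference is $d\,a_d^2$. This is exactly where I must reconcile the result with the stated constant $2d\,a_d^2$. The check $f = X$ gives $X^2 - (X^2 - 1) = 1 = d\,a_d^2$, so the honest outcome of this route is the factor $d$, not $2d$.

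As printed, then, the statement can only be established in the weakened form: the coefficient of $X^{2d-2}$ is a positive multiple of $a_d^2$, namely $d\,a_d^2 > 0$, and all higher-degree terms vanish. This is the only feature used later, where one needs the expression for $c_k^g$ to be eventually positive for skew log-concavity. I would record the constant as $d\,a_d^2$ and note that the stated factor $2$ is a misprint that has no effect on any subsequent sign argument.
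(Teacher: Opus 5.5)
Your proof is correct, and so is your correction of the constant. The paper states this lemma without any proof, so there is no argument to compare yours against. The printed coefficient $2da_d^2$ is wrong, exactly as you say. Already $f(X)=X^d$ gives $X^{2d}-(X^2-1)^d = dX^{2d-2}+O(X^{2d-4})$.

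In general the coefficient of $X^{2d-2}$ is the one you found, $d^2a_d^2-d(d-1)a_d^2 = da_d^2$. A direct coefficient comparison confirms this: the lower-order contributions $2a_da_{d-2}+a_{d-1}^2$ occur in both $f(X)^2$ and $f(X+1)f(X-1)$ and cancel. The formula also holds for $d=1$, where $f''=0$.

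Your even/odd Taylor splitting is a clean way to see three things at once: the terms of degree $2d$ and $2d-1$ cancel, the top surviving term is that of $(f')^2-ff''$, and the remaining error is even $O(X^{2d-4})$ relative to $(f')^2-ff''$. The only blemish is notational. You use $O$ both for the odd part of the expansion and for the Landau symbol, so rename one of them.

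As you note, the lemma's only use is in the proof of Theorem~\ref{thm:SkewLogConcavity}(a). That argument needs only two facts: the difference vanishes for $d=0$, and its leading coefficient is positive for $d\geq 1$. The corrected constant $da_d^2>0$ therefore leaves the argument intact.
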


In a sense, for any polynomial over $\R$, when viewed as a function, the region in which it is not log-concave is bounded.

\begin{proof}[Proof of Theorem \ref{thm:SkewLogConcavity}]
    Regarding item (a), we invoke item (a) of Proposition \ref{prop:DegreeOfPolynomials} to obtain that the coefficients $a_n^g(n - k)$ are in fact polynomials in $n$ which will become log-concave from a certain $n_0$ onward due to Lemma \ref{lem:CoefficientOff(X)^2 - f(X+1)f(X-1)}.

    Item (b) follows directly from item (b) of Proposition \ref{prop:DegreeOfPolynomials} since $e(m) = 0$ for all $m \geq 2$.
\end{proof}

\section{Exemplary computations and outlook}

Lastly, we will have a look at a few examples and suggest some further investigatory objectives.

\subsection{Some examples} In this subsection, we will pay a closer look to explicit examples.

\begin{center}
\begin{tabular}{c||c|c|c}
     $g$ & $a_n^g(n)$ & $a_n^g(n - 1)$ & $a_n^g(n - 2)$ \\
     \hline 
     general $g$ & 1 & $\frac{g(2)(n-1)n}{2}$ & $\frac{(n - 2)(n - 1)n(3g(2)^2(n - 3) + 8g(3))}{24}$\\  
     $e$& 1 & 0& 0\\
     $E$& 1 & $\frac{(n-1)n}{2}$& $\frac{(n - 2)(n - 1)n(3n - 1)}{24}$ \\
     $I_\ell$& 1 & $2^{\ell-1}(n-1)n$& $(n - 2)(n - 1)n(2^{2\ell - 3}(n - 3) +  3^{\ell -1})$\\
     $\sigma_\ell$& 1 & $\frac{(2^{\ell} + 1)(n-1)n}{2}$ & $\frac{(n - 2)(n - 1)n(3(2^{\ell} + 1)^2(n - 3) + 8(3^\ell + 1))}{24}$
\end{tabular}
\end{center}

While the case where $g = e$ is extendable to be $a_n(n - k) = 0$ for any $k \in \N$ by item (b) of Proposition \ref{prop:DegreeOfPolynomials}, explicit computations are needed to determine the polynomials in other cases due to recurrence relations.

\subsection{Interdependence} Suppose $F_n(x) \coloneqq \sum_{k = 0}^{\deg(F_n)} f_n(K)X^k$ is a family of polynomials that are vertically log-concave at $n \in \N \setminus \{0\}$ for $k \in \N \setminus \{0\}$ and horizontally log-concave at $k$ for $n \pm 1$. Consider the inequality
\[f_n(k)^4 \geq f_{n-1}(k)^2f_{n+1}(k)^2 \geq f_{n-1}(k-1)f_{n-1}(k + 1)f_{n + 1}(k - 1)f_{n+1}(k+1).\]
If $f_n(k)^2 \leq f_{n - 1}(k + 1)f_{n+1}(k - 1)$ (i.e. it is some sort of ``anti-skewly log-convex''), then $(F_n(X))_{n \in \N}$ is skewly log-concave at $k \in \N \setminus \{0\}$ for $n \in \N \setminus \{0\}$. While this additional property of convexity certainly is sufficient, one may ask which kind of equivalent condition exists.

\end{document}